%%%%%%%%%%%%%%%%%%%%%%%%%%%%%%%%%%%%%%%%%%%%%%%%%%%%%%%%%%%%%%%%%%%%%%%%%
%%%%%%%%%%%%%%%%%%%%%%%%%%%%%%%%%%%%%%%%%% GLOBAL ENVIRONMENT VARIABLES %%%%%%%%%%%%%%%%%%%%%%%%%%%%%%%%%%%%%%%%%%%%%%%%%%%%%%
%%%%%%%%%%%%%%%%%%%%%%%%%%%%%%%%%%%%%%%%%%%%%%%%%%%%%%%%%%%%%%%%%%%%%%%%%%%%%%%%%%%%%%%%%%%%%%%%%%%%%%%%%%%%%%%%%%%%%%%%%%%%%%

\documentclass[11pt]{amsart}
\usepackage{amsfonts}
\usepackage{amsmath,graphicx,amssymb,amsthm}
\usepackage{amsmath}
\usepackage{amssymb}
\usepackage{graphicx}
\usepackage{comment}
\usepackage[backref]{hyperref}
\setcounter{MaxMatrixCols}{30}
\providecommand{\U}[1]{\protect\rule{.1in}{.1in}}

\usepackage[all]{xypic} 
\usepackage{tikz-cd}

%EndMSIPreambleData

%\addtolength{\textheight}{10mm} \addtolength{\textwidth}{25mm}
%\addtolength{\oddsidemargin}{-15mm}
%\addtolength{\evensidemargin}{-15mm} \addtolength{\topmargin}{-10mm}

%-----Jan added commands----------------------

%----------------------------------------------------
%\linespread{1.7}

\def\U{\mathcal U}

\def\amslatex{$\mathcal{A}\kern-.1667em\lower.5ex\hbox{$M$}\kern-.125em\mathcal{S}$-\LaTeX}

%Commands introduced by Kunal----------------

\newcommand{\abs}[1]{\left\vert#1\right\vert}
\newcommand{\norm}[1]{\left\Vert#1\right\Vert}
\newcommand{\h}{\mathcal{H}}
%---------------------------

\newtheorem{set}{set}[section]

\newcommand{\enifed}{\mathrel{\hbox{$\equiv$\hskip -.90em \lower .47ex \hbox{$\rightharpoondown$}}}}

\newtheorem{theorem}[set]{Theorem}
\theoremstyle{plain}

\newtheorem{remark}[set]{Remark}

%\newcommand{\h}{\mathcal{H}}
%\newcommand{\abs}[1]{\left\vert#1\right\vert}
%\newcommand{\norm}[1]{\left\Vert#1\right\Vert}

%%%%%%%%%%%%%%%%%%%%%%%%%%%%%%%%%%%%%%%%%%%%%%%%%%%%%%%%%%%%%%%%%%%%%%%%%%%%%%%%%%%%%%%%%%%%%%%%%%%%%%%%%%%%%%%%%%%%%%%%%%%%%%%
%%%%%%%%%%%%%%%%%%%%%%%%%%%%%%%%%%%%%%%%%%%%%%%%%%%%%%%%%%%%%%%%%%%%%%%%%%%%%%%%%%%%%%%%%%%%%%%%%%%%%%%%%%%%%%%%%%%%%%%%%%%%%%%

%%%%%%%%%%%%%%%%%%%%%%%%%%%%%%%%%%%%%%%   FOR SHRINKING %%%%%%%%%%%%%%%%%%%%%%%%%%%%%%%%%%%%%%%%%%%%%%%%%%%%%%%%%%%%%%%%%%%%%%%

\headheight=10pt     \topmargin=0pt \textheight=642pt
\addtolength{\textheight}{3mm} \addtolength{\textwidth}{11mm}
\addtolength{\oddsidemargin}{-8mm}%\addtolength{\evensidemargin}{-8mm} \addtolength{\topmargin}{-5mm}

\allowdisplaybreaks[4]

%%%%%%%%%%%%%%%%%%%%%%%%%%%%%%%%%%%%%%%%%%%%%%%%%%%%%%%%%%%%%%%%%%%%%%%%%%%%%%%%%%%%%%%%%%%%%%%%%%%%%%%%%%%%%%%%%%%%%%%%%%%%%%

%Jan's added commands

%\newcommand{\sett}[1]{\left\{#1\right\}}
%\newcommand{\Real}{\mathbb R}

%\newcommand{\Nat}{\mathbb N}
%\newcommand{\eps}{\varepsilon}
%\newcommand{\To}{\longrightarrow}
%\newcommand{\BX}{\mathbf{B}(X)}
%\newcommand{\Hil}{\mathcal{H}}
%\newcommand{\Proj}{\mathcal{P}_{min}}
%\newcommand{\A}{\mathcal{A}}
%\newcommand{\E}{\mathbb{E}}
%\newcommand{\N}{\mathcal{N}}
%\newcommand{\F}{\mathbb{F}}
%\newcommand{\ip}[2]{\left< #1, #2  \right>}

\begin{document}
\title[The Modular Symmetry of Markov Maps]{The Modular Symmetry of Markov Maps}
\author{Jon P. Bannon}
\address{Siena College Department of Mathematics, 515 Loudon Road, Loudonville,
NY\ 12211, USA}
\author{Jan Cameron}
\address{Department of Mathematics and Statistics, Vassar College, Poughkeepsie, NY 12604, USA}
\author{Kunal Mukherjee}
\address{Indian Institute of Technology Madras, Chennai 600 036, India}
\keywords{von Neumann Algebras, Completely Positive Maps}

\begin{abstract}
A state--preserving automorphism of a von Neumann algebra induces a canonical unitary operator on the GNS Hilbert space of the state which fixes the vacuum. This unitary commutes with both the modular operator of the state and its modular conjugation. We prove an extension of this result for state--preserving unital completely positive maps.
\end{abstract}
\maketitle

%%%%%%%%%%%%%%%%%%%%%%%%%%%%%%%%%%%%%%%%%%%%%%%%%%%%%%%%%%%%%%%%%%%%%%%%%%%%%%%%%%%%%%%%%%%%%%%%%%%%%%%%%%%%%%%%%%%%%%%%%%%%%%%%%

%%%%%%%%%%%%%%%%%%%%%%%%%%%%%%%%%%%%%%%%%%%%%%
\section{Introduction}
The starting point of this note is the classic paper of Haagerup \cite{Ha} on standard forms, in which it was observed that any state-preserving automorphism of a von Neumann algebra can be extended to a unitary operator on the GNS Hilbert space of the state.  It was proved there, by an elegant argument with the polar decomposition, that this unitary extension commutes with both the modular operator of the state and its associated modular conjugation. It is natural to ask whether these results can be generalized to the setting of state-preserving completely positive maps between von Neumann algebras; the main result of this paper answers this question affirmatively for an important class of such maps.

If $M$ and $N$ are von Neumann algebras with normal, faithful states $\varphi:M \rightarrow \mathbb{C}$ and $\rho:N \rightarrow \mathbb{C}$, a linear map $\Phi: M \rightarrow N$ is called a $(\varphi,\rho)$--Markov map if $\Phi$ is unital, completely positive, and satisfies 
\[\rho \circ \Phi = \varphi, \quad \text{ and } \quad \sigma_t^\rho \circ \Phi = \Phi \circ \sigma_t^\varphi, \text{ for all }  t \in \mathbb{R}.\]
Any such map is automatically normal, and extends in a natural way to a Hilbert space operator $T_\Phi: L^2(M,\varphi) \rightarrow L^2(N,\rho),$ which we call the $L^2$-extension of $\Phi$.  The main result of this note is that the $L^2$-extension $T_\Phi$ of any $(\varphi,\rho)$-Markov map $\Phi$ between von Neumann algebras $(M,\varphi)$ and $(N,\rho)$ satisfies an analogous property to the state-preserving automorphisms studied in \cite{Ha}.  In particular, we show that for any such $\Phi$, the map $T_\Phi$ intertwines the anti-linear isometries associated to $\varphi$ and $\rho$, and -- up to passing to the closure of a (necessarily) densely-defined operator -- also intertwines the modular operators of $\varphi$ and $\rho$.  

Markov maps between von Neumann algebras that connect a pair of states and intertwine their modular automorphism groups in this way have appeared recently in the von Neumann algebra literature in various contexts.  Anantharaman-Delaroche \cite{AD} used $(\varphi,\rho)$--Markov maps in proving a noncommutative version of an ergodic theorem of Nevo and Stein, and asked whether all such maps were ``factorizable''. Haagerup and Musat later answered this question in the negative, on the way to their solution to the Asymptotic Quantum Birkhoff Conjecture \cite{HaMu}. It has also been shown (cf. Theorem 4.1 of \cite{CaSk}) that a version of the Haagerup Approximation Property for a von Neumann algebra $M$ with a fixed normal, faithful state $\varphi$ can be formulated in terms of $(\varphi,\varphi)$-Markov maps on $M$.

The initial motivation for studying this problem arose in an attempt to develop a general theory of joinings of $W^*$-dynamical systems (the initial steps of which will appear in the forthcoming paper \cite{BCM}), i.e., $(\varphi,\rho)$--Markov maps as above that also intertwine actions of a group $G$ on the associated GNS Hilbert spaces. Many properties of classical measurable dynamical systems are defined relative to a given subsystem  -- for example, relative ergodicity and relative weak mixing, and relative independence of a pair of systems over a common subsystem -- but formulation of their noncommutative analogues poses technical challenges.   One issue that arises is the requirement of ``modular symmetry" of the canonical $L^{2}$--extension of a certain Markov map associated to the dynamical system.  Indeed, the result mentioned above (Proposition 3.7 of \cite{Ha}) on commutation of the $L^2$-extension of a state-preserving automorphism with both the modular operator and modular conjugation plays a role in previous work on joinings of  $W^{*}$--dynamical systems (see, for instance, Construction 3.4 of \cite{Du} and Lemma 2.4 of \cite{Du2}).  With a view toward further developments on joinings of $W^*$-dynamical systems, we therefore aim to establish a modular symmetry result in the more general setting of the induced operator associated to a Markov map.  Our main result is the following. %we provide the analogous result for general (time--covariant) quantum channels beyond state-preserving automorphisms, which is not at all immediate, but is essential for the study of relative noncommutative joinings. We prove:

\begin{theorem}\label{Commutation}
Let $(N,\rho)$ and $(M,\varphi)$ be two $W^{\ast}$--probability spaces, where $N,M$ are von Neumann algebras with separable 
preduals and $\rho$, $\varphi$ are faithful normal states on $N$ and $M$, respectively. Let $\Phi:N\rightarrow M$ be a unital completely positive $($u.c.p. in the sequel$)$ map such that $\varphi\circ\Phi=\rho$ and $\Phi \circ\sigma_{t}^{\rho}= \sigma_{t}^{\varphi} \circ \Phi$ for all $t\in \mathbb{R}$, where $\sigma_{t}^{\rho}, \sigma_{t}^{\varphi}$ denote the associated modular automorphisms. Denote by $\Delta_{\rho}$ and $\Delta_{\varphi}$ the associated modular operators and by $J_{\rho}$ and $J_{\varphi}$ the associated anti--linear isometries. Then, 
\begin{align*}
&(i) \text{ }\overline{\Delta_{\varphi}^{-s}T_{\Phi}\Delta_{\rho}^{s}}=T_{\Phi}, \text{ for all }s\in \mathbb{R},\\
&(ii) \text{ }J_{\varphi} T_{\Phi} J_{\rho}=T_{\Phi},
%&(iii) \text{ }\overline{S_{\varphi}T_{\Phi}S_{\rho}}=T_{\Phi},
\end{align*}
where $T_{\Phi}: \mathbf{B}(L^{2}(N,\rho))\rightarrow \mathbf{B}(L^{2}(M,\varphi))$ is the $L^{2}$--extension of $\Phi$.
\end{theorem}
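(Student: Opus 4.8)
The plan is to isolate one algebraic identity driving both statements, and then to push it through the standard ``imaginary powers $\Rightarrow$ real powers'' machinery of Tomita--Takesaki theory. Write $\Omega_\rho,\Omega_\varphi$ for the cyclic vectors of the GNS representations, so that $L^2(N,\rho)=\overline{N\Omega_\rho}$ and $T_\Phi(n\Omega_\rho)=\Phi(n)\Omega_\varphi$; this is a well-defined contraction, since the Kadison--Schwarz inequality together with $\varphi\circ\Phi=\rho$ gives $\|\Phi(n)\Omega_\varphi\|^2=\varphi(\Phi(n)^*\Phi(n))\le\varphi(\Phi(n^*n))=\rho(n^*n)=\|n\Omega_\rho\|^2$. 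The starting observation is that $T_\Phi$ intertwines the modular \emph{unitaries}: since $\Delta_\rho^{it}$ implements $\sigma_t^\rho$ and fixes $\Omega_\rho$, for every $n\in N$ one has
\[
\Delta_\varphi^{it}T_\Phi(n\Omega_\rho)=\sigma_t^\varphi(\Phi(n))\Omega_\varphi=\Phi(\sigma_t^\rho(n))\Omega_\varphi=T_\Phi\Delta_\rho^{it}(n\Omega_\rho),
\]
so that $\Delta_\varphi^{it}T_\Phi=T_\Phi\Delta_\rho^{it}$ for all $t\in\mathbb R$, an identity between everywhere-defined bounded operators. Everything else is the problem of promoting this to non-unitary (real) powers.

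For part $(i)$ I would prove the ``core'' statement that $T_\Phi$ carries $\mathrm{dom}(\Delta_\rho^s)$ into $\mathrm{dom}(\Delta_\varphi^s)$ with $\Delta_\varphi^s T_\Phi=T_\Phi\Delta_\rho^s$ on $\mathrm{dom}(\Delta_\rho^s)$. Fix $s>0$ and $\xi\in\mathrm{dom}(\Delta_\rho^s)$. The function $z\mapsto\Delta_\rho^{iz}\xi$ is bounded and continuous on the strip $\{-s\le\mathrm{Im}\,z\le 0\}$, holomorphic in its interior, and equals $\Delta_\rho^s\xi$ at $z=-is$; applying the bounded operator $T_\Phi$, the function $z\mapsto T_\Phi\Delta_\rho^{iz}\xi$ is again bounded holomorphic on the strip, and on the real axis it equals $\Delta_\varphi^{it}(T_\Phi\xi)$ by the intertwining relation above. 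By the standard characterization of $\mathrm{dom}(\Delta_\varphi^s)$ in terms of bounded holomorphic extensions of $t\mapsto\Delta_\varphi^{it}(T_\Phi\xi)$ to this strip (a Phragm\'en--Lindel\"of / three-lines argument), it follows that $T_\Phi\xi\in\mathrm{dom}(\Delta_\varphi^s)$ and that the extension is $z\mapsto\Delta_\varphi^{iz}(T_\Phi\xi)$; evaluating at $z=-is$ yields $T_\Phi\Delta_\rho^s\xi=\Delta_\varphi^s T_\Phi\xi$ (the case $s<0$ being symmetric). Granting this, for $\xi\in\mathrm{dom}(\Delta_\rho^s)$ one has $T_\Phi\Delta_\rho^s\xi=\Delta_\varphi^s T_\Phi\xi\in\mathrm{ran}(\Delta_\varphi^s)=\mathrm{dom}(\Delta_\varphi^{-s})$, so $\Delta_\varphi^{-s}T_\Phi\Delta_\rho^s$ is precisely the restriction of $T_\Phi$ to the dense domain $\mathrm{dom}(\Delta_\rho^s)$; as $T_\Phi$ is bounded, its closure is $T_\Phi$, which is $(i)$.

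For part $(ii)$ I would deduce the $J$-intertwining from $(i)$ at $s=\tfrac12$, in the spirit of Haagerup's polar-decomposition argument. Let $S_\rho$ be the closure of $n\Omega_\rho\mapsto n^*\Omega_\rho$, with polar decomposition $S_\rho=J_\rho\Delta_\rho^{1/2}$, and similarly $S_\varphi=J_\varphi\Delta_\varphi^{1/2}$. Since $\Phi$ is $*$-preserving, $T_\Phi S_\rho(n\Omega_\rho)=\Phi(n^*)\Omega_\varphi=\Phi(n)^*\Omega_\varphi=S_\varphi T_\Phi(n\Omega_\rho)$. Rewriting both sides via the polar decompositions and using $(i)$ to replace $\Delta_\varphi^{1/2}T_\Phi(n\Omega_\rho)$ by $T_\Phi\Delta_\rho^{1/2}(n\Omega_\rho)$, this identity becomes $T_\Phi J_\rho\zeta=J_\varphi T_\Phi\zeta$ for every $\zeta$ of the form $\Delta_\rho^{1/2}(n\Omega_\rho)=J_\rho(n^*\Omega_\rho)$. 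As $J_\rho$ is an anti-unitary bijection, the set $J_\rho(N\Omega_\rho)$ is dense, and since $T_\Phi J_\rho$ and $J_\varphi T_\Phi$ are both bounded we conclude $J_\varphi T_\Phi=T_\Phi J_\rho$, i.e.\ $J_\varphi T_\Phi J_\rho=T_\Phi$.

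The main obstacle is the step in $(i)$ upgrading the bounded intertwining $\Delta_\varphi^{it}T_\Phi=T_\Phi\Delta_\rho^{it}$ to an intertwining of the unbounded positive operators $\Delta_\rho^s,\Delta_\varphi^s$: one must control the a priori merely densely-defined operator $\Delta_\varphi^{-s}T_\Phi\Delta_\rho^s$, pin down its domain, and identify its closure, which is exactly why a genuine closure appears in the statement. The analytic-extension/strip argument above is the cleanest route; the remaining inputs (density of $N\Omega_\rho$, injectivity of $\Delta_\rho,\Delta_\varphi$ from faithfulness giving the duality $\mathrm{dom}(\Delta^{-s})=\mathrm{ran}(\Delta^s)$, and the polar decomposition $S=J\Delta^{1/2}$) are standard facts of Tomita--Takesaki theory.
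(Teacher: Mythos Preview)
Your argument is correct and in fact more economical than the paper's. Both proofs start from the bounded intertwining $\Delta_\varphi^{it}T_\Phi=T_\Phi\Delta_\rho^{it}$ and pass to real powers via the strip characterization of $\mathrm{dom}(\Delta^s)$, obtaining the inclusion $T_\Phi\Delta_\rho^s\subseteq\Delta_\varphi^sT_\Phi$. The paper then invests considerable effort (its Theorem~3.1) in upgrading this inclusion to the equality $\overline{T_\Phi\Delta_\rho^z}=\Delta_\varphi^zT_\Phi$, using the entire analytic vectors $\mathfrak{D}_\rho$ as a common core and a Hahn--Banach separation argument that also invokes the Accardi--Cecchini adjoint $\Phi^*$. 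You bypass this entirely: from the inclusion alone you observe that the domain of $\Delta_\varphi^{-s}T_\Phi\Delta_\rho^s$ is exactly $\mathrm{dom}(\Delta_\rho^s)$ (since $T_\Phi\Delta_\rho^s\xi=\Delta_\varphi^sT_\Phi\xi\in\mathrm{ran}(\Delta_\varphi^s)=\mathrm{dom}(\Delta_\varphi^{-s})$), and on that domain the composite equals $T_\Phi$, so its closure is $T_\Phi$. For $(ii)$ both proofs exploit $\Phi(n^*)=\Phi(n)^*$ and the polar decomposition $S=J\Delta^{1/2}$; your version is again slightly more direct, cancelling the $\Delta^{1/2}$ factors on the dense set $J_\rho(N\Omega_\rho)$ rather than routing through $\overline{S_\varphi T_\Phi S_\rho}$. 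What the paper's route buys is the stronger auxiliary statement $\overline{T_\Phi\Delta_\rho^z}=\Delta_\varphi^zT_\Phi$ together with an explicit common core, which is of independent interest; for the theorem as stated your shortcut suffices.
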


\vspace{5mm}
\textbf{Acknowledgements:}
Work on this paper was initiated during a visit of KM to Vassar College and Siena College in 2012, partially supported by Vassar's Rogol Distinguished Visitor program.  We thank the Rogol Fund for this support.  JC's research was partially supported by a research travel grant from the Simons Foundation, and by Simons Foundation Collaboration Grant for Mathematicians \#319001. JB thanks Liming Ge and the Academy of Mathematics and Systems Science of the Chinese Academy of Sciences for their hospitality and support. KM thanks Serban \c{S}tr\v{a}til\v{a} for helpful conversations. The authors thank Ken Dykema, David Kerr and Vern Paulsen, for their helpful feedback on this project.

%%%%%%%%%%%%%%%%%%%%%%%%%%%%%%%%%%%%%%%%%%%%%%%%%%%%%%%%%%%%%%%%%%%%%%%%%%

\section{Preliminaries}

All von Neumann algebras in this paper have separable preduals. Let $M$ be a von Neumann algebra with a faithful, normal state $\varphi$. Denote by $L^{2}(M,\varphi)$ and $\Omega_{\varphi}$ the associated GNS Hilbert space and its canonical unit cyclic and separating vector, and let $M$ act on $L^{2}(M,\varphi)$ via left multiplication. We will denote the inner product and norm on $L^{2}(M,\varphi)$ by $\langle\cdot,\cdot\rangle_{\varphi}$ and $\norm{\cdot}_{\varphi}$, respectively.

We recall, without proof, the following facts that are needed in the sequel. The conjugate--linear map defined by $S_{0,\varphi}x\Omega_{\varphi}=x^{\ast}\Omega_{\varphi}$ for all $x\in M$ is closable with closure $S_{\varphi}$ having polar decomposition $J_{\varphi}\Delta_{\varphi}^{1/2}$.  In fact, the adjoint $S_{\varphi}^{\ast}$ is the closure of the closable linear map on $L^{2}(M,\varphi)$ defined by $F_{0}x^{\prime}\Omega_{\varphi}=(x^{\prime})^{\ast}\Omega_{\varphi}$ for all $x^{\prime}\in M^{\prime}$,
and the polar decomposition of $S_{\varphi}^{\ast}$ is $J_{\varphi}%
\Delta_{\varphi}^{-1/2}$. The conjugate--linear map $J_{\varphi}:L^{2}(M,\varphi) \rightarrow L^{2}(M,\varphi)$ satisfies $J_{\varphi}^{2}=1$ and $\langle J_{\varphi}\xi,J_{\varphi}\eta\rangle_{\varphi}=\langle\eta,\xi\rangle_{\varphi}$ for all $\xi,\eta\in L^{2}(M,\varphi)$, i.e. $J_{\varphi}=J_{\varphi}^{\ast}$ as a conjugate--linear map. Tomita's modular operator is the positive, self--adjoint operator $\Delta_{\varphi}=S_{\varphi}^{\ast}S_{\varphi}$. The operator $\Delta_{\varphi}$ is invertible and satisfies
$J_{\varphi}\Delta_{\varphi}J_{\varphi}=\Delta_{\varphi}^{-1}$, as well as $S_{\varphi}=J_{\varphi}\Delta_{\varphi
}^{1/2}=\Delta_{\varphi}^{-1/2}J_{\varphi}$ and $S_{\varphi}^{\ast}%
=J_{\varphi}\Delta_{\varphi}^{-1/2}=\Delta_{\varphi}^{1/2}J_{\varphi}$. Furthermore, $\Delta_{\varphi}^{it}J_{\varphi}=J_{\varphi}\Delta_{\varphi}^{it}$ and $\Delta_{\varphi}^{it}\Omega_{\varphi}=J_{\varphi}\Omega_{\varphi}=\Omega_{\varphi}$ for all $t\in\mathbb{R}$. By the fundamental theorem of Tomita and Takesaki, $\Delta_{\varphi}^{it}M\Delta_{\varphi}^{-it}=M$
for all $t\mathbb{\in R}$, and $J_{\varphi}MJ_{\varphi}=M^{\prime}$. Recall that $\sigma_{t}^{\varphi}(x)=\Delta_{\varphi}^{it}x\Delta_{\varphi}^{-it}$ for all $x\in M$ defines the modular automorphism of $M$ associated to $t \in \mathbb{R}$. For more detail we refer the reader to \cite{St} and \cite{Ta}.

Let $H$ be a densely--defined positive self--adjoint nonsingular operator on a Hilbert space $\h$. Then $H$ and $1$ generate an abelian von Neumann algebra $\mathcal{A}\subset \textbf{B}(\h)$ and $H$ is affiliated to $\mathcal{A}$. If $f$ and $g$ are complex--valued Borel measurable functions on $\mathbb{C}$ such that $f=g$ on 
 $\sigma(H)\setminus\{0\}$ $($or equivalently $f=g$ on $(0,\infty))$, then $f(H)=g(H)$ and these (possibly densely defined) operators are closed. For $z\in \mathbb{C}$, let 
\begin{equation*}
f_{z}(u)=\begin{cases} e^{zLog \text{ }u}, \text{ }u\in \mathbb{C}_{s}=\{w\in\mathbb{C}: w\neq -\abs{w}\}, \\
0, \text{ otherwise},\end{cases}
\end{equation*} 
where $Log$ is the principal branch of the logarithm on $\mathbb{C}_{s}$. Then $f_{z_{1}}f_{z_{2}}= f_{z_{1}+z_{2}}$ for all $z_{1},z_{2}\in\mathbb{C}$. Thus, writing $H^{z}=f_{z}(H)$ and using the functional calculus for unbounded operators $($Theorem 5.6.26 \cite{KRI}$)$, one has $\overline{H^{z_{1}}H^{z_{2}}} =\overline{f_{z_{1}}(H)f_{z_{2}}(H)}= (f_{z_{1}}f_{z_{2}})(H)= f_{z_{1}+z_{2}}(H)$. Let $\log$ be the Borel function on $\mathbb{C}$ defined as $Log\text{ }u$ when $u\in\mathbb{C}_{s}$ and $0$ when $u=-\abs{u}$, it follows that $\mathbb{R}\ni t\mapsto e^{it\log H}$ is a strong--operator continuous one--parameter group of unitaries on $\h$. Note that the functions $s\mapsto e^{it\log s}$ and $f_{it}$ agree on $\sigma(H)\setminus \{0\}\subseteq \mathbb{C}_{s}$ and thus 
\begin{align*}
H^{it} = f_{it}(H) = e^{it\log H}.
\end{align*}

Since $H$ is closed and injective, its inverse operator $H^{\prime}$ is closed and densely defined. Let $H^{-1}$ denote $f_{-1}(H)$. Then $\overline{HH^{-1}}=1$. Note that $H^{\prime}$ and $H^{-1}$ are both affiliated to $\mathcal{A}$ and both are inverses to $H$ in the algebra of all closed operators affiliated to $\mathcal{A}$. Thus,
\begin{align*}
H^{-1}=\overline{(\overline{H^{\prime}H)}H^{-1}}=\overline{H^{\prime}\overline{(HH^{-1})}}=H^{\prime}.
\end{align*}
 
Again $H^{-1}$ is positive, self--adjoint and nonsingular and $f_{z}\circ f_{-1}=f_{-z}$ for all 
$z\in\mathbb{C}$. Thus, $(H^{-1})^{z}=H^{-z}$ for all $z\in\mathbb{C}$. While $f_{-1}\circ f_{z}$ need not agree with $f_{-z}$ for all $z\in\mathbb{C}$, these do agree when $z$ is a real number. Thus for any $t\in \mathbb{R}$, 
\begin{align*}
(H^{-1})^{t}=H^{-t}=(H^{t})^{-1},
\end{align*}
so that $(H^{-1})^{t}, (H^{t})^{-1}$ and $H^{-t}$ are all inverses of the operator $H^{t}$. 

Note that $H^{z}$ is affiliated to $\mathcal{A}$ for all $z\in \mathbb{C}$. Recall that if $T$ is closed and $A$ is bounded and everywhere defined on $\h$, then $TA$ is closed. Thus, if $f$ and $g$ are Borel functions whose domains each contain $\sigma(H)$ and $g$ is bounded, then  $f(H)g(H)$ is densely defined and closed. Thus $(fg)(H)=\overline{f(H)g(H)}=f(H)g(H)$  and for all $t\in\mathbb{R}$ and $z \in \mathbb{C}$ 
\begin{align*}
&H^{z}H^{it}=\overline{H^{z}H^{it}}=H^{z+it}.
\end{align*}
As $H^{z}$ is affiliated to $\mathcal{A}$ and $H^{it}$ is a unitary in this abelian algebra,  
\begin{align*}
&H^{it}H^{z}=H^{z}H^{it}=H^{z+it}
\end{align*}
for all $t\in\mathbb{R}$ and $z \in \mathbb{C}$. 

Let $\mathcal{G}(\cdot)$ denote the graph of an operator, and let $0\neq \alpha\in \mathbb{R}$. Then it is a standard fact of the Tomita--Takesaki theory that $(\xi,\eta)\in \mathcal{G}(H^{\alpha})$ if and only if the function $i\mathbb{R}\ni it \mapsto H^{it}\xi\in \h$ defined on the imaginary axis has a continuous extension $F$ to the strip $\mathcal{D}_{\alpha}$ which is analytic in its interior $\mathcal{D}_{\alpha}^{\circ}$ and $F(\alpha)=\eta$, where $\mathcal{D}_{\alpha}=\{z\in \mathbb{C}: 0\leq \Re{z}\leq \alpha\}$ if $\alpha> 0$ and $\mathcal{D}_{\alpha}=\{z\in \mathbb{C}: \alpha\leq \Re{z}\leq 0\}$ if $\alpha < 0$. By convention $H^{0}=1$ and thus $\mathcal{D}_{0}=\mathfrak{D}(H^{0})=\h$ $($c.f. Lemma $\rm{VI}.2.3$, \cite{Ta}$)$. We will use the above facts with $H$ replaced by the modular operators $\Delta_{\rho}$ and $\Delta_{\varphi}$ in the statement of Theorem \ref{Commutation}. In what follows, if $H=\Delta_{\omega}$ with $\omega\in\{\rho,\varphi\}$, we will denote %the domain $\mathcal{D}_{\alpha}$ by $\mathcal{D}_{\omega,\alpha}$ and 
$\mathcal{A}$ by $\mathcal{A}_{\omega}$.

\section{Main Results}

As above, let $N$ and $M$ be von Neumann algebras equipped with faithful, normal states
$\rho$ and $\varphi$, respectively. Let $\Phi:N\rightarrow M$ be a u.c.p. map such that $\varphi\circ\Phi=\rho$. Then $\Phi$ is automatically normal by a classic result of Tomiyama
\cite{To}. Define $T_{\Phi}:L^{2}(N,\rho)\rightarrow L^{2}(M,\varphi)$ by $T_{\Phi}(x\Omega_{\rho}
)=\Phi(x)\Omega_{\varphi}$ for all $x\in N$. Thus, $T_{\Phi}$ is, a priori, an unbounded
operator. However, by Kadison's inequality we have
\begin{align*}
\langle T_{\Phi}(x\Omega_{\rho}),T_{\Phi}(x\Omega_{\rho})\rangle_{\varphi}  
&=\langle\Phi(x)\Omega_{\varphi},\Phi(x)\Omega_{\varphi}\rangle_{\varphi}
=\varphi(\Phi(x^{\ast})\Phi(x))\\
\nonumber&  \leq\varphi(\left\Vert \Phi(1)\right\Vert \Phi(x^{\ast}x))=\rho(x^{\ast}x)\\
\nonumber&  =\langle x\Omega_{\rho},x\Omega_{\rho}\rangle_{\rho}.
\end{align*}
Thus, $T_{\Phi}$ extends to a bounded operator from $L^{2}(N,\rho)$ to
$L^{2}(M,\varphi)$ of norm $1$, as $\norm{T_{\Phi}(\Omega_{\rho})}_{\varphi}=1$. 
Moreover, if $\Phi\circ\sigma_{t}^{\rho}=\sigma_{t}^{\varphi}\circ\Phi$ for all
$t\in\mathbb{R}$, then by a result of Accardi--Cecchini \cite{AC}
there exists a normal u.c.p. map $\Phi^{\ast}:M\rightarrow N$
satisfying
\begin{equation}
\rho(\Phi^{\ast}(y)x)=\varphi(y\Phi(x))\label{Eq: AccardiCecciniAdjoint}
\end{equation}
for all $y\in M$ and $x\in N$. It follows that $T_{\Phi}^{*} = T_{\Phi^{*}}$. Furthermore, for all $t\in \mathbb{R}$ 
\begin{align}\label{Eq: Crucial Equation}
&T_{\Phi}\Delta^{it}_{\rho}=\Delta^{it}_{\varphi}T_{\Phi}.
\end{align}
Let $CP(N,M,\rho,\varphi)$ be the set
\begin{align*}
\{\Phi:N\rightarrow M |\text{ } \Phi \text{ is u.c.p., } \varphi\circ\Phi=\rho \text{ and } \Phi\circ \sigma_{t}^{\rho}=\sigma_{t}^{\varphi}\circ \Phi \text{ }\forall t\in \mathbb{R}\}. 
\end{align*}
 
The next intermediary result is natural and encodes a lot of information, but its proof is tedious. We provide a detailed proof since we cannot find one in the literature. 

\begin{theorem}\label{Commute}
Let $N$ and $M$ be von Neumann algebras equipped with faithful normal states $\rho$ and $\varphi$ respectively. Let $\Phi\in CP(N,M,\rho,\varphi)$.  Then
\begin{align*}
\mathfrak{D}_{\rho}=\bigl\{\xi\in N\Omega_{\rho}: \exists \text{ }F:\mathbb{C}\rightarrow N\Omega_{\rho}, \text{ } F \text{ is entire and } F(it)=\Delta_{\rho}^{it}\xi \text{ }\forall t\in\mathbb{R}\bigr\}
\end{align*}
is a core for $\Delta_{\varphi}^{z}T_{\Phi}$ and 
\begin{align*}
&\overline{T_{\Phi}\Delta_{\rho}^{z}}= \Delta_{\varphi}^{z}T_{\Phi}, \text{ }z\in\mathbb{C}.
\end{align*}
\end{theorem}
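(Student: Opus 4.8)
The plan is to establish the operator identity $\overline{T_{\Phi}\Delta_{\rho}^{z}} = \Delta_{\varphi}^{z}T_{\Phi}$ by a two-sided inclusion, with the core assertion doing the heavy lifting and yielding the reverse inclusion. First I would prove the inclusion $T_{\Phi}\Delta_{\rho}^{z}\subseteq\Delta_{\varphi}^{z}T_{\Phi}$. For real $\alpha$ and $\xi\in\mathfrak{D}(\Delta_{\rho}^{\alpha})$, the analytic characterization recalled in the Preliminaries gives a continuous $F$ on the strip $\mathcal{D}_{\alpha}$, analytic in the interior, with $F(it)=\Delta_{\rho}^{it}\xi$ and $F(\alpha)=\Delta_{\rho}^{\alpha}\xi$. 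Since $T_{\Phi}$ is bounded, $z\mapsto T_{\Phi}F(z)$ is again continuous on $\mathcal{D}_{\alpha}$ and analytic inside, and by the intertwining relation \eqref{Eq: Crucial Equation} it satisfies $T_{\Phi}F(it)=\Delta_{\varphi}^{it}T_{\Phi}\xi$; the same characterization then forces $T_{\Phi}\xi\in\mathfrak{D}(\Delta_{\varphi}^{\alpha})$ with $\Delta_{\varphi}^{\alpha}T_{\Phi}\xi=T_{\Phi}\Delta_{\rho}^{\alpha}\xi$. Factoring $\Delta_{\rho}^{z}=\Delta_{\rho}^{i\Im z}\Delta_{\rho}^{\Re z}$ and using that $\Delta_{\rho}^{i\Im z}$ is a unitary intertwined by $T_{\Phi}$ with $\Delta_{\varphi}^{i\Im z}$ upgrades this to every $z\in\mathbb{C}$. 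In particular $\mathfrak{D}_{\rho}\subseteq\mathfrak{D}(\Delta_{\rho}^{z})\subseteq\mathfrak{D}(\Delta_{\varphi}^{z}T_{\Phi})$ and the two operators agree on $\mathfrak{D}_{\rho}$. As $\Delta_{\varphi}^{z}$ is closed and $T_{\Phi}$ bounded, $\Delta_{\varphi}^{z}T_{\Phi}$ is closed, hence $\overline{T_{\Phi}\Delta_{\rho}^{z}}\subseteq\Delta_{\varphi}^{z}T_{\Phi}$.

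The reverse inclusion, and with it both claims of the theorem, follows once $\mathfrak{D}_{\rho}$ is shown to be a core for $A:=\Delta_{\varphi}^{z}T_{\Phi}$, since then $A=\overline{A|_{\mathfrak{D}_{\rho}}}=\overline{(T_{\Phi}\Delta_{\rho}^{z})|_{\mathfrak{D}_{\rho}}}\subseteq\overline{T_{\Phi}\Delta_{\rho}^{z}}$. To prove the core property I would smooth along the modular group. Writing $K_{\omega}:=\log\Delta_{\omega}$ (so that $\Delta_{\omega}^{it}=e^{itK_{\omega}}$) and $g_{\epsilon}(t)=(4\pi\epsilon)^{-1/2}e^{-t^{2}/4\epsilon}$, set $G_{\epsilon}^{\omega}:=\int_{\mathbb{R}}g_{\epsilon}(t)\Delta_{\omega}^{it}\,dt=e^{-\epsilon K_{\omega}^{2}}$ for $\omega\in\{\rho,\varphi\}$. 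I would verify: (a) $G_{\epsilon}^{\omega}$ is a contraction and $G_{\epsilon}^{\omega}\to 1$ strongly as $\epsilon\to 0$; (b) $G_{\epsilon}^{\rho}(a\Omega_{\rho})=a_{\epsilon}\Omega_{\rho}$ with $a_{\epsilon}=\int g_{\epsilon}(t)\sigma_{t}^{\rho}(a)\,dt$ entire analytic for $\sigma^{\rho}$, so that $G_{\epsilon}^{\rho}(N\Omega_{\rho})\subseteq\mathfrak{D}_{\rho}$; (c) $T_{\Phi}G_{\epsilon}^{\rho}=G_{\epsilon}^{\varphi}T_{\Phi}$, immediate from \eqref{Eq: Crucial Equation}; and (d) $\Delta_{\varphi}^{z}G_{\epsilon}^{\varphi}=e^{zK_{\varphi}-\epsilon K_{\varphi}^{2}}$ is bounded, since $\Re(zs-\epsilon s^{2})$ is bounded above on $\mathbb{R}$.

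Given these, fix $\xi\in\mathfrak{D}(A)$ and pick $\zeta_{m}\in N\Omega_{\rho}$ with $\zeta_{m}\to\xi$. Then $\zeta_{m,\epsilon}:=G_{\epsilon}^{\rho}\zeta_{m}\in\mathfrak{D}_{\rho}$ by (b), and by (c) and (d),
\[A\zeta_{m,\epsilon}=\Delta_{\varphi}^{z}T_{\Phi}G_{\epsilon}^{\rho}\zeta_{m}=(\Delta_{\varphi}^{z}G_{\epsilon}^{\varphi})T_{\Phi}\zeta_{m}.\]
Letting $m\to\infty$, the bounded operator of (d) gives $\zeta_{m,\epsilon}\to G_{\epsilon}^{\rho}\xi$ and $A\zeta_{m,\epsilon}\to(\Delta_{\varphi}^{z}G_{\epsilon}^{\varphi})T_{\Phi}\xi=G_{\epsilon}^{\varphi}\Delta_{\varphi}^{z}T_{\Phi}\xi=G_{\epsilon}^{\varphi}A\xi$, using $T_{\Phi}\xi\in\mathfrak{D}(\Delta_{\varphi}^{z})$. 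Letting $\epsilon\to 0$ and invoking (a) gives $G_{\epsilon}^{\rho}\xi\to\xi$ and $G_{\epsilon}^{\varphi}A\xi\to A\xi$. A diagonal selection then produces a sequence in $\mathfrak{D}_{\rho}$ converging to $\xi$ in the graph norm of $A$, so $\mathfrak{D}_{\rho}$ is a core and the theorem follows.

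I expect the core assertion to be the crux. The domain of $A=\Delta_{\varphi}^{z}T_{\Phi}$ is $T_{\Phi}^{-1}\mathfrak{D}(\Delta_{\varphi}^{z})$, which is a priori much larger than $\mathfrak{D}(\Delta_{\rho}^{z})$ and whose elements need not lie in $N\Omega_{\rho}$, so no naive truncation keeps one inside $\mathfrak{D}_{\rho}$. The Gaussian smoothing dissolves this tension because it simultaneously regularizes a vector into an entire analytic one, placing it in $\mathfrak{D}(\Delta_{\rho}^{z})$, and, when applied to $N\Omega_{\rho}$, keeps it inside $N\Omega_{\rho}$, placing it in $\mathfrak{D}_{\rho}$; property (c) is exactly what transports the domain information from the $\varphi$-side back to the $\rho$-side. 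The remaining work is honest bookkeeping: justifying the operator-valued Gaussian integral together with the identity $G_{\epsilon}^{\omega}=e^{-\epsilon K_{\omega}^{2}}$, the entire analyticity in (b), and the interchange of limits, none of which is deep but all of which must be done carefully.
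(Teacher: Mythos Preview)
Your argument is correct and takes a genuinely different route from the paper's proof. Both proofs share the easy inclusion $T_{\Phi}\Delta_{\rho}^{z}\subseteq\Delta_{\varphi}^{z}T_{\Phi}$ via the analytic characterization of $\mathfrak{D}(\Delta_{\omega}^{\alpha})$, and both identify the core property of $\mathfrak{D}_{\rho}$ for $\Delta_{\varphi}^{z}T_{\Phi}$ as the decisive step. The paper, however, first reduces to real $z=s$, then establishes the core property by an orthogonality argument: assuming $(\xi_{0},\Delta_{\varphi}^{s}T_{\Phi}\xi_{0})$ is orthogonal to the graph of the restriction to $\mathfrak{D}_{\rho}$, it uses the Accardi--Cecchini adjoint $\Phi^{*}$ and the self-adjointness and positivity of $\Delta_{\rho}^{s}$, $\Delta_{\varphi}^{s}$ to deduce $\Delta_{\rho}^{s}T_{\Phi^{*}}\Delta_{\varphi}^{s}T_{\Phi}\xi_{0}=-\xi_{0}$, and then a spectral positivity squeeze forces $T_{\Phi}\xi_{0}=0$ and hence $\xi_{0}=0$. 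Your Gaussian mollification is more constructive: the operators $G_{\epsilon}^{\omega}=e^{-\epsilon(\log\Delta_{\omega})^{2}}$ simultaneously map $N\Omega_{\rho}$ into $\mathfrak{D}_{\rho}$, are intertwined by $T_{\Phi}$, and render $\Delta_{\varphi}^{z}G_{\epsilon}^{\varphi}$ bounded, so the double limit manufactures the graph-norm approximants directly. Your approach has two advantages: it works uniformly for complex $z$ without the reduction step, and it never invokes the existence of $\Phi^{*}$, so it applies verbatim to any state-preserving u.c.p.\ map satisfying the intertwining relation $T_{\Phi}\Delta_{\rho}^{it}=\Delta_{\varphi}^{it}T_{\Phi}$, without appealing to the modular-group intertwining at the algebra level needed for the Accardi--Cecchini construction. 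The paper's argument, on the other hand, is a clean Hilbert-space trick that avoids any analysis of operator-valued integrals and spectral calculus identities like $\Delta_{\varphi}^{z}G_{\epsilon}^{\varphi}=G_{\epsilon}^{\varphi}\Delta_{\varphi}^{z}$ on $\mathfrak{D}(\Delta_{\varphi}^{z})$.
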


We remark that the containment $T_{\Phi}\Delta_{\rho}^{z}\subseteq \Delta_{\varphi}^{z}T_{\Phi}$ is easy to verify but the equality as stated above requires argument. 

\begin{proof}
\noindent\textbf{Step 1}: In this step, we justify that it is enough to prove the assertions when $z$ is real. Let $z=s+it$. Then, as discussed above, $\Delta_{\omega}^{z}=\Delta_{\omega}^{s}\Delta_{\omega}^{it}=\Delta_{\omega}^{it}\Delta_{\omega}^{s}$, where $\omega\in \{\rho,\varphi\}$. Assume that $T_{\Phi}\Delta_{\rho}^{s}$ is closable, $\overline{T_{\Phi}\Delta_{\rho}^{s}}= \Delta_{\varphi}^{s}T_{\Phi}$ and $\mathfrak{D}_{\rho}$ is a common core of $\overline{T_{\Phi}\Delta_{\rho}^{s}}$ and $\Delta_{\varphi}^{s}T_{\Phi}$. 

Note that $\overline{T_{\Phi}\Delta_{\rho}^{s}}\Delta_{\rho}^{it}$ is closed. Also note that $\Delta_{\rho}^{it}\mathfrak{D}_{\rho}=\mathfrak{D}_{\rho}$. It follows that $T_{\Phi}\Delta_{\rho}^{s}\Delta_{\rho}^{it}$ is to be densely defined and so $\Delta_{\rho}^{-it}(T_{\Phi}\Delta_{\rho}^{s})^{*} \subseteq 
(T_{\Phi}\Delta_{\rho}^{s}\Delta_{\rho}^{it})^{*}$. Since $T_{\Phi}\Delta_{\rho}^{s}$ is closable, $(T_{\Phi}\Delta_{\rho}^{s})^{*}$ is densely defined, which forces 
$(T_{\Phi}\Delta_{\rho}^{s}\Delta_{\rho}^{it})^{*}$ to be densely defined, and consequently, 
$T_{\Phi}\Delta_{\rho}^{z}=T_{\Phi}\Delta_{\rho}^{s}\Delta_{\rho}^{it}$ is closable. 
Clearly $\overline{T_{\Phi}\Delta_{\rho}^{z}}\subseteq \overline{T_{\Phi}\Delta_{\rho}^{s}}\Delta_{\rho}^{it}$. 
For the other inclusion, let $(\xi, \overline{T_{\Phi}\Delta_{\rho}^{s}}\Delta_{\rho}^{it}\xi)\in \mathcal{G}(\overline{T_{\Phi}\Delta_{\rho}^{s}}\Delta_{\rho}^{it})$. Then $\Delta_{\rho}^{it}\xi\in\mathfrak{D}(\overline{T_{\Phi}\Delta_{\rho}^{s}})$, and since $\mathfrak{D}_{\rho}$ is a core for $\overline{T_{\Phi}\Delta_{\rho}^{s}}$ there exists $\xi_{n}\in \mathfrak{D}_{\rho}$ such that $\xi_{n}\rightarrow \Delta_{\rho}^{it}\xi$ and $\overline{T_{\Phi}\Delta_{\rho}^{s}}\xi_{n}\rightarrow \overline{T_{\Phi}\Delta_{\rho}^{s}}\Delta_{\rho}^{it}\xi$. Since $\Delta_{\rho}^{it}$ keeps $\mathfrak{D}_{\rho}$ invariant, $\mathfrak{D}(T_{\Phi}\Delta_{\rho}^{s}\Delta_{\rho}^{it})\supseteq\mathfrak{D}_{\rho}\ni\Delta_{\rho}^{-it}\xi_{n} \rightarrow \xi$ and  
$T_{\Phi}\Delta_{\rho}^{s}\Delta_{\rho}^{it}(\Delta_{\rho}^{-it}\xi_{n})\rightarrow \overline{T_{\Phi}\Delta_{\rho}^{s}}\Delta_{\rho}^{it}\xi$; thus $(\xi, \overline{T_{\Phi}\Delta_{\rho}^{s}}\Delta_{\rho}^{it}\xi)\in 
\mathcal{G}(\overline{T_{\Phi}\Delta_{\rho}^{s}\Delta_{\rho}^{it}})$. It follows that $\overline{T_{\Phi}\Delta_{\rho}^{s}}\Delta_{\rho}^{it}=\overline{T_{\Phi}\Delta_{\rho}^{s}\Delta_{\rho}^{it}}$. Consequently, 
by Eq. \eqref{Eq: Crucial Equation}, we have 
\begin{align*}
\Delta_{\varphi}^{z}T_{\Phi}=\Delta_{\varphi}^{s+it}T_{\Phi}=
\Delta_{\varphi}^{s}\Delta_{\varphi}^{it}T_{\Phi}=\Delta_{\varphi}^{s}T_{\Phi}\Delta_{\rho}^{it}
=\overline{T_{\Phi}\Delta_{\rho}^{s}}\Delta_{\rho}^{it}=\overline{T_{\Phi}\Delta_{\rho}^{s}\Delta_{\rho}^{it}}
=\overline{T_{\Phi}\Delta_{\rho}^{z}},
\end{align*}
and $\mathfrak{D}_{\rho}$ is a core for $\Delta_{\varphi}^{z}T_{\Phi}=\overline{T_{\Phi}\Delta_{\rho}^{z}}$. Thus, it is sufficient to prove the assertions when $z\in\mathbb{R}$. 

\noindent\textbf{Step 2}: Let $z=s\in\mathbb{R}$. We claim that $\overline{T_{\Phi}\Delta_{\rho}^{s}}\subseteq \Delta_{\varphi}^{s}T_{\Phi}$. 

Let $(\xi,T_{\Phi}\Delta_{\rho}^{s}\xi)\in \mathcal{G}(T_{\Phi}\Delta_{\rho}^{s})$. Then $\xi\in \mathfrak{D}(\Delta_{\rho}^{s})$. Let 
$F: \mathcal{D}_{s}\rightarrow L^{2}(N,\rho)$ be a continuous function such that $F$ is analytic in $\mathcal{D}_{s}^{\circ}$, $F(it)=\Delta_{\rho}^{it}\xi$ for all $t\in\mathbb{R}$ and $F(s)=\Delta_{\rho}^{s}\xi$. Then by the uniqueness of analytic continuation and Eq. \eqref{Eq: Crucial Equation} it follows that $T_{\Phi}\circ F: \mathcal{D}_{s}\rightarrow L^{2}(M,\varphi)$ is continuous, analytic in $\mathcal{D}_{s}^{\circ}$ and $(T_{\Phi}\circ F)(it)=\Delta_{\varphi}^{it} T_{\Phi}(\xi)$. Hence $\xi\in \mathfrak{D}(\Delta_{\varphi}^{s}T_{\Phi})$ and $\Delta_{\varphi}^{s}T_{\Phi}(\xi)=(T_{\Phi}\circ F)(s)=T_{\Phi}(\Delta_{\rho}^{s}\xi)$. This shows that $T_{\Phi}\Delta_{\rho}^{s}\subseteq \Delta_{\varphi}^{s}T_{\Phi}$. Consequently, as $\Delta_{\varphi}^{s}T_{\Phi}$ is closed it follows that $T_{\Phi}\Delta_{\rho}^{s}$ is closable and $\overline{T_{\Phi}\Delta_{\rho}^{s}}\subseteq \Delta_{\varphi}^{s}T_{\Phi}$. 

\noindent\textbf{Step 3}: We now proceed to find a common subspace $\mathfrak{D}_{\rho}$ on which  $\overline{T_{\Phi}\Delta_{\rho}^{s}}$ and $\Delta_{\varphi}^{s}T_{\Phi}$ both agree. By the previous discussion, $\mathbb{R}\ni t\mapsto \Delta_{\rho}^{it}$ $($resp. $\Delta_{\varphi}^{it})$, is a strongly continuous unitary group with infinitesimal self--adjoint generator $($Hamiltonian$)$ $\log \Delta_{\rho}$ $($resp. $\log\Delta_{\varphi})$. Note that $\Delta_{\rho}^{it}N\Omega_{\rho}=N\Omega_{\rho}$, for all $t\in \mathbb{R}$. Also, if $f\in L^{1}(\mathbb{R})$, then the operator $\Delta_{\rho}(f)=\int_{\mathbb{R}}f(s)\Delta_{\rho}^{is}ds$ keeps $N\Omega_{\rho}$ invariant. Indeed, for $x\in N$, the map 
\begin{align*}
L^{2}(N,\rho)\times L^{2}(N,\rho)\ni (\xi,\eta) \mapsto \int_{\mathbb{R}}f(t)\langle\sigma_{t}^{\rho}(x)\xi,\eta\rangle_{\rho}dt
\end{align*}
defines a bounded, sesquilinear form yielding a bounded operator $\int_{\mathbb{R}}f(t)\sigma_{t}^{\rho}(x)dt$ on $L^{2}(N,\rho)$. A simple calculation shows that $\int_{\mathbb{R}}f(t)\sigma_{t}^{\rho}(x)dt$ commutes with 
$J_{\rho}yJ_{\rho}$ for each $y\in N$, and thus by Tomita's theorem $\int_{\mathbb{R}}f(t)\sigma_{t}^{\rho}(x)dt\in N$. Consequently, $\Delta_{\rho}(f)(N\Omega_{\rho})\subseteq N\Omega_{\rho}$. It follows that $\mathfrak{D}_{\rho}$ as defined in the statement of the theorem
is a core for $\Delta_{\rho}^{z}$, for any $z\in \mathbb{C}$, and in particular is a core for
$\Delta_{\rho}^{s}$ $($\cite{Ta} pp. 121--122$)$. Working analogously with $M$, one finds that $\mathfrak{D}_{\varphi}$ $($which is defined similarly to $\mathfrak{D}_{\rho})$ is a common core for $\Delta_{\varphi}^{z}$ for all $z\in\mathbb{C}$, and in particular is a core for 
$\Delta_{\varphi}^{s}$. We claim that 
\begin{align}\label{AgreeonD_U}
T_{\Phi}\Delta_{\rho}^{s}=\Delta_{\varphi}^{s}T_{\Phi} \text{ on } \mathfrak{D}_{\rho}.
\end{align}
Indeed, if $\xi\in \mathfrak{D}_{\rho}$ and $F:\mathbb{C}\rightarrow N\Omega_{\rho}$ is  such that $F$ is entire and $F(it)=\Delta_{\rho}^{it}\xi$ 
for all $t\in\mathbb{R}$, then $T_{\Phi}\circ F:\mathbb{C}\rightarrow M\Omega_{\varphi}$ is entire and $(T_{\Phi}\circ F)(it)=\Delta_{\varphi}^{it}T_{\Phi}(\xi)$ for all $t\in \mathbb{R}$. Thus, $T_{\Phi}(\xi)\in \mathfrak{D}_{\varphi}\subseteq \mathfrak{D}(\Delta_{\varphi}^{s})$ and $T_{\Phi}\Delta_{\rho}^{s}\xi=(T_{\Phi}\circ F)(s)=\Delta_{\varphi}^{s}T_{\Phi}(\xi)$. This establishes Eq. \eqref{AgreeonD_U}.

\noindent\textbf{Step 4}: We now claim that $\mathfrak{D}_{\rho}$ is a core for $\Delta_{\varphi}^{s}T_{\Phi}$. To see this, first note that $\mathcal{G}(\Delta_{\varphi}^{s}{T_{\Phi}}|_{\mathfrak{D}_{\rho}})\subseteq \mathcal{G}(\Delta_{\varphi}^{s}T_{\Phi})$ and the latter is closed. Consider any $(\xi_0,\Delta_{\varphi}^{s}T_{\Phi}(\xi_0))\in \mathcal{G}(\Delta_{\varphi}^{s}T_{\Phi})$ such that $(\xi_0,\Delta_{\varphi}^{s}T_{\Phi}(\xi_0))\in \mathcal{G}({\Delta_{\varphi}^{s}T_{\Phi\vert}}_{\mathfrak{D}_{\rho}})^{\perp}$. Then for all $\zeta\in\mathfrak{D}_{\rho}$, one has 
\begin{align*}
\langle \xi_{0}, \zeta\rangle_{\rho}&=-\langle \Delta_{\varphi}^{s}T_{\Phi}(\xi_{0}), \Delta_{\varphi}^{s}T_{\Phi}(\zeta)\rangle_{\varphi}\\
&=-\langle \Delta_{\varphi}^{s}T_{\Phi}(\xi_{0}), T_{\Phi}\Delta_{\rho}^{s}\zeta\rangle_{\varphi}\text{ (by Eq. \eqref{AgreeonD_U})}\\
&=-\langle T_{\Phi}^{*}\Delta_{\varphi}^{s}T_{\Phi}(\xi_{0}),\Delta_{\rho}^{s}\zeta\rangle_{\rho}.
\end{align*}
Since $\mathfrak{D}_{\rho}$ is a core of $\Delta_{\rho}^{s}$,
\begin{align}\label{HahnBanachSeperation}
\langle \xi_{0}, \zeta\rangle_{\rho}&=-\langle T_{\Phi}^{*}\Delta_{\varphi}^{s}T_{\Phi}(\xi_{0}),\Delta_{\rho}^{s}\zeta\rangle_{\rho}, \text{ for all }\zeta\in \mathfrak{D}(\Delta_{\rho}^{s}).
\end{align} 
Hence $T_{\Phi}^{*}\Delta_{\varphi}^{s}T_{\Phi}(\xi_{0})\in \mathfrak{D}(\Delta_{\rho}^{s})$, since $\Delta_{\rho}^{s}$ is self--adjoint and positive. Therefore, 
\begin{align*}
\langle \xi_{0}, \zeta\rangle_{\rho}&=-\langle \Delta_{\rho}^{s}T_{\Phi}^{*}\Delta_{\varphi}^{s}T_{\Phi}(\xi_{0}),\zeta\rangle_{\rho}, \text{ for all }\zeta\in \mathfrak{D}(\Delta_{\rho}^{s}).
\end{align*}

Since $\mathfrak{D}(\Delta_{\rho}^{s})$ is dense, so $\Delta_{\rho}^{s}T_{\Phi}^{*}\Delta_{\varphi}^{s}T_{\Phi}(\xi_{0})=-\xi_{0}$. Thus, $(T_{\Phi}^{*}\Delta_{\varphi}^{s}T_{\Phi}(\xi_{0}),-\xi_{0})\in \mathcal{G}(\Delta_{\rho}^{s})$. 
As $\Delta_{\rho}^{s},\Delta_{\varphi}^{s}$ are self--adjoint and positive it follows that, 
\begin{align}\label{Positivity}
0&\leq \langle \Delta_{\rho}^{s}T_{\Phi}^{*}\Delta_{\varphi}^{s}T_{\Phi}(\xi_{0}), T_{\Phi}^{*}\Delta_{\varphi}^{s}T_{\Phi}(\xi_{0})\rangle_{\rho}\\
\nonumber&= -\langle\xi_{0}, T_{\Phi}^{*}\Delta_{\varphi}^{s}T_{\Phi}(\xi_{0}) \rangle_{\rho}\\
\nonumber&= -\langle T_{\Phi}(\xi_{0}), \Delta_{\varphi}^{s}T_{\Phi}(\xi_{0}) \rangle_{\varphi}\leq 0 \text{ (note that }T_{\Phi}(\xi_{0})\in \mathfrak{D}(\Delta_{\varphi}^{s})).
\end{align}
Let $(e_{\lambda})_{\lambda\geq 0}$, be the spectral resolution of $\Delta_{\varphi}$ in $\mathcal{A}_{\varphi}$. Then, 
$\Delta_{\varphi}^{s}=\int_{0}^{\infty}\lambda^{s}de_{\lambda}$. From Eq. \eqref{Positivity} it follows that
$\int_{0}^{\infty} \lambda^{s} d\mu_{T_{\Phi}(\xi_{0})}=0$, 
where $\mu_{T_{\Phi}(\xi_{0})}$ is the elementary spectral measure of $\Delta_{\varphi}$ associated to 
the vector $T_{\Phi}(\xi_{0})$. So $[0,\infty) \ni \lambda \mapsto \lambda^{s}$ is $0$ almost everywhere with respect to
$\mu_{T_{\Phi}(\xi_{0})}$, which is impossible unless $T_{\Phi}(\xi_{0})=0$, as $\Delta_{\varphi}$ is nonsingular. So by Eq. \eqref{HahnBanachSeperation} it follows that $\langle \xi_{0},\zeta\rangle_{\rho}=0$ for all $\zeta\in \mathfrak{D}(\Delta_{\rho}^{s})$, i.e. $\xi_{0}=0$. This shows that $\mathfrak{D}_{\rho}$ is indeed a core for $\Delta_{\varphi}^{s}T_{\Phi}$.

\noindent\textbf{Step 5}: In the final step, we proceed to show that $\mathfrak{D}_{\rho}$ is a core for $\overline{T_{\Phi}\Delta_{\rho}^{s}}$ as well. Note that since $T_{\Phi}$ is bounded, it follows that $(T_{\Phi}\Delta_{\rho}^{s})^{*}=\Delta_{\rho}^{s}T_{\Phi}^{*}=\Delta_{\rho}^{s}T_{\Phi^{*}}$, where $\Phi^{*}\in CP(M,N,\varphi,\rho)$ is defined in 
Eq. \eqref{Eq: AccardiCecciniAdjoint}. Reversing the roles of $N$ and $M$ and arguing just as above it follows that $(T_{\Phi}\Delta_{\rho}^{s})^{*}=\Delta_{\rho}^{s}T_{\Phi^{*}}$ is closed with core $\mathfrak{D}_{\varphi}$. It follows that $\overline{T_{\Phi}\Delta_{\rho}^{s}}=(T_{\Phi}\Delta_{\rho}^{s})^{**}=(\Delta_{\rho}^{s}T_{\Phi^{*}})^{*}$. Consequently $(\Delta_{\rho}^{s}T_{\Phi^{*}})^{*}$ is closed.  

We intend to show that $\mathfrak{D}_{\rho}$ is a core for $(\Delta_{\rho}^{s}T_{\Phi^{*}})^{*}$. Once this is established, from Eq. \eqref{AgreeonD_U} it follows that $\overline{T_{\Phi}\Delta_{\rho}^{s}}=\Delta_{\varphi}^{s}T_{\Phi}$.

Let $\zeta^{\prime}\in \mathfrak{D}_{\rho}$ and $\eta\in \mathfrak{D}(\Delta_{\rho}^{s}T_{\Phi^{*}})$. Since $\zeta^{\prime}\in \mathfrak{D}(\Delta_{\rho}^{s})$ and $\Delta_{\rho}^{s}$ is self--adjoint, 
\begin{align*}
\abs{\langle \Delta_{\rho}^{s}T_{\Phi^{*}}(\eta), \zeta^{\prime}\rangle_{\rho}}=\abs{\langle T_{\Phi^{*}}(\eta), \Delta_{\rho}^{s}(\zeta^{\prime})\rangle_{\rho}}\leq \norm{\eta}_{\varphi}\norm{\Delta_{\rho}^{s}(\zeta^{\prime})}_{\rho},
\end{align*}
as $\norm{T_{\Phi^{*}}}=1$. It follows that $\zeta^{\prime} \in \mathfrak{D}((\Delta_{\rho}^{s}T_{\Phi^{*}})^{*})$, i.e. $\mathfrak{D}_{\rho}\subseteq \mathfrak{D}((\Delta_{\rho}^{s}T_{\Phi^{*}})^{*})$. 

Suppose there exists $\xi^{\prime}\in \mathfrak{D}((\Delta_{\rho}^{s}T_{\Phi^{*}})^{*})$ such that for all $\zeta^{\prime}\in\mathfrak{D}_{\rho}$, 
\begin{align}\label{FlipClosed}
\langle \xi^{\prime}, \zeta^{\prime}\rangle_{\rho}&= - \langle (\Delta_{\rho}^{s}T_{\Phi^{*}})^{*}(\xi^{\prime}), (\Delta_{\rho}^{s}T_{\Phi^{*}})^{*}(\zeta^{\prime})\rangle_{\varphi}\\
\nonumber&=- \langle (\Delta_{\rho}^{s}T_{\Phi^{*}})^{*}(\xi^{\prime}), \overline{T_{\Phi}\Delta_{\rho}^{s}}(\zeta^{\prime})\rangle_{\varphi}\\
\nonumber&=- \langle (\Delta_{\rho}^{s}T_{\Phi^{*}})^{*}(\xi^{\prime}), T_{\Phi}\Delta_{\rho}^{s}(\zeta^{\prime})\rangle_{\varphi} \text{ (as } \zeta^{\prime}\in \mathfrak{D}_{\rho}\subseteq \mathfrak{D}(T_{\Phi}\Delta_{\rho}^{s}))\\
\nonumber&=- \langle T_{\Phi^{*}}(\Delta_{\rho}^{s}T_{\Phi^{*}})^{*}(\xi^{\prime}), \Delta_{\rho}^{s}(\zeta^{\prime})\rangle_{\rho}.
\end{align}

Then,  $(\xi^{\prime}, T_{\Phi^{*}}(\Delta_{\rho}^{s}T_{\Phi^{*}})^{*}(\xi^{\prime}))$ is orthogonal to $\mathcal{G}({\Delta_{\rho}^{s}\lvert}_{\mathfrak{D}_{\rho}})$ and because $\mathfrak{D}_{\rho}$ is a core for the closed operator $\Delta_{\rho}^{s}$, by Eq. \eqref{FlipClosed} we obtain 
\begin{align*}
\langle \xi^{\prime}, \zeta^{\prime}\rangle_{\rho}&= - \langle T_{\Phi^{*}}(\Delta_{\rho}^{s}T_{\Phi^{*}})^{*}(\xi^{\prime}), \Delta_{\rho}^{s}(\zeta^{\prime})\rangle_{\rho} \text{ for all }\zeta^{\prime}\in \mathfrak{D}(\Delta_{\rho}^{s}).
\end{align*}
It follows that $T_{\Phi^{*}}(\Delta_{\rho}^{s}T_{\Phi^{*}})^{*}(\xi^{\prime})\in \mathfrak{D}(\Delta_{\rho}^{s})$ $($as $\Delta_{\rho}^{s}=(\Delta_{\rho}^{s})^{*})$ and 
\begin{align*}
\langle \xi^{\prime}, \zeta^{\prime}\rangle_{\rho}&= - \langle (\Delta_{\rho}^{s}T_{\Phi^{*}})(\Delta_{\rho}^{s}T_{\Phi^{*}})^{*}(\xi^{\prime}), \zeta^{\prime}\rangle_{\rho}.
\end{align*}

Because $\mathfrak{D}(\Delta_{\rho}^{s})$ is dense, it follows that $ \Delta_{\rho}^{s}T_{\Phi^{*}}(\Delta_{\rho}^{s}T_{\Phi^{*}})^{*}(\xi^{\prime}) =-\xi^{\prime}$.
Thus, $-\langle \xi^{\prime},\xi^{\prime}\rangle_{\rho}=\langle \Delta_{\rho}^{s}T_{\Phi^{*}}(\Delta_{\rho}^{s}T_{\Phi^{*}})^{*}(\xi^{\prime}),\xi^{\prime}\rangle_{\rho}=\langle (\Delta_{\rho}^{s}T_{\Phi^{*}})^{*}(\xi^{\prime}),(\Delta_{\rho}^{s}T_{\Phi^{*}})^{*}(\xi^{\prime})\rangle_{\rho}\geq 0$. It follows that $(\Delta_{\rho}^{s}T_{\Phi^{*}})^{*}(\xi^{\prime})=0$, which forces $\xi^{\prime}=0$. This Hahn--Banach separation argument shows that $\mathfrak{D}_{\rho}$ is a core for $(\Delta_{\rho}^{s}T_{\Phi^{*}})^{*}$, and the proof is complete.
\end{proof}

\begin{remark}
\emph{It is not possible to obtain that $T_{\Phi}\Delta_{\rho}^{z}=\Delta_{\varphi}^{z}T_{\Phi}$ in general in Theorem \ref{Commute}. To see this, note that if $\Phi(\cdot)=\rho(\cdot)1_{M}$, then $\Delta_{\varphi}^{z}T_{\Phi}$ is an everywhere defined bounded operator, while $T_{\Phi}\Delta_{\rho}^{z}$ is only densely defined.}
\end{remark}

\begin{theorem}\label{CommuteJ}
Let $\Phi\in CP(N,M,\rho,\varphi)$. Then
\begin{align*}
&(i) \text{ }\overline{\Delta_{\varphi}^{-s}T_{\Phi}\Delta_{\rho}^{s}}=T_{\Phi} \text{ for all }s\in \mathbb{R};\\
&(ii) \text{ }J_{\varphi} T_{\Phi} J_{\rho}=T_{\Phi};\\
&(iii) \text{ }\overline{S_{\varphi}T_{\Phi}S_{\rho}}=T_{\Phi}.
\end{align*}

\end{theorem}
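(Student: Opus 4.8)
The plan is to establish the three identities in the listed order: I would deduce (i) directly from Theorem \ref{Commute}, locate the essential content in (ii), and derive (iii) by feeding (ii) back into Theorem \ref{Commute} and (i). The persistent technical nuisance throughout is domain bookkeeping for the unbounded closed operators $\Delta_\rho^{s},\Delta_\varphi^{s},S_\rho,S_\varphi$ against the bounded but non-invertible contraction $T_\Phi$, compounded by the conjugate-linearity of $J_\rho,J_\varphi,S_\rho,S_\varphi$.

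For (i) I would invoke the inclusion $T_\Phi\Delta_\rho^{s}\subseteq\Delta_\varphi^{s}T_\Phi$ from Theorem \ref{Commute}: for each $\xi\in\mathfrak{D}(\Delta_\rho^{s})$ one gets $T_\Phi\xi\in\mathfrak{D}(\Delta_\varphi^{s})$ and $T_\Phi\Delta_\rho^{s}\xi=\Delta_\varphi^{s}T_\Phi\xi$. Applying $\Delta_\varphi^{-s}$ and using that $\Delta_\varphi^{-s}\Delta_\varphi^{s}$ acts as the identity on $\mathfrak{D}(\Delta_\varphi^{s})$ yields $\Delta_\varphi^{-s}T_\Phi\Delta_\rho^{s}\subseteq T_\Phi$, whence $\overline{\Delta_\varphi^{-s}T_\Phi\Delta_\rho^{s}}\subseteq T_\Phi$ since $T_\Phi$ is closed. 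For the reverse inclusion I would verify that $\mathfrak{D}_\rho$ lies in the domain of $\Delta_\varphi^{-s}T_\Phi\Delta_\rho^{s}$ and that the operator restricts there to $T_\Phi$; this uses $T_\Phi(\mathfrak{D}_\rho)\subseteq\mathfrak{D}_\varphi$ (shown in the proof of Theorem \ref{Commute}) and the invariance $\Delta_\varphi^{s}(\mathfrak{D}_\varphi)=\mathfrak{D}_\varphi$, immediate from the entire-extension description of $\mathfrak{D}_\varphi$. As $\mathfrak{D}_\rho$ is dense and $T_\Phi$ is bounded, $T_\Phi=\overline{T_\Phi|_{\mathfrak{D}_\rho}}\subseteq\overline{\Delta_\varphi^{-s}T_\Phi\Delta_\rho^{s}}$, giving equality.

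The heart of the matter is (ii). Since $\Phi$ is positive it is $*$-preserving, and this makes $T_\Phi$ intertwine the two canonical conjugate-linear operators on $N\Omega_\rho$: for $x\in N$,
\[
S_\varphi T_\Phi(x\Omega_\rho)=S_\varphi\bigl(\Phi(x)\Omega_\varphi\bigr)=\Phi(x)^{*}\Omega_\varphi=\Phi(x^{*})\Omega_\varphi=T_\Phi(x^{*}\Omega_\rho)=T_\Phi S_\rho(x\Omega_\rho),
\]
the application of $S_\varphi$ being legitimate because $\Phi(x)\Omega_\varphi\in M\Omega_\varphi\subseteq\mathfrak{D}(S_\varphi)$. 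Substituting the polar decompositions $S_\rho=J_\rho\Delta_\rho^{1/2}$, $S_\varphi=J_\varphi\Delta_\varphi^{1/2}$ and applying $J_\varphi$ on the left (with $J_\varphi^{2}=1$) converts this into
\[
\Delta_\varphi^{1/2}T_\Phi\,\xi=(J_\varphi T_\Phi J_\rho)\,\Delta_\rho^{1/2}\xi,\qquad \xi\in\mathfrak{D}_\rho .
\]
On the other hand the inclusion $T_\Phi\Delta_\rho^{1/2}\subseteq\Delta_\varphi^{1/2}T_\Phi$ of Theorem \ref{Commute} gives $\Delta_\varphi^{1/2}T_\Phi\xi=T_\Phi\Delta_\rho^{1/2}\xi$ for $\xi\in\mathfrak{D}_\rho$. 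Comparing, the two bounded linear operators $U:=J_\varphi T_\Phi J_\rho$ and $T_\Phi$ agree on $\Delta_\rho^{1/2}(\mathfrak{D}_\rho)$. The decisive point is that this set is dense: writing $F(z)=\Delta_\rho^{z}\xi$ for the entire $N\Omega_\rho$-valued extension attached to $\xi\in\mathfrak{D}_\rho$, the shifted function $z\mapsto F(z+1/2)$ exhibits $\Delta_\rho^{1/2}\xi$ as again lying in $\mathfrak{D}_\rho$, and symmetrically for $\Delta_\rho^{-1/2}$, so that $\Delta_\rho^{1/2}(\mathfrak{D}_\rho)=\mathfrak{D}_\rho$, which is dense. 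Boundedness of $U$ and $T_\Phi$ then forces $U=T_\Phi$, i.e. $J_\varphi T_\Phi J_\rho=T_\Phi$.

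Finally, (iii) follows by feeding (ii) into the computation of $S_\varphi T_\Phi S_\rho$: writing $S_\varphi=\Delta_\varphi^{-1/2}J_\varphi$ and $S_\rho=J_\rho\Delta_\rho^{1/2}$ and using $J_\varphi T_\Phi J_\rho=T_\Phi$, one identifies $S_\varphi T_\Phi S_\rho$ with $\Delta_\varphi^{-1/2}T_\Phi\Delta_\rho^{1/2}$ (the matching of domains being a routine consequence of $J_\varphi\mathfrak{D}(\Delta_\varphi^{1/2})=\mathfrak{D}(\Delta_\varphi^{-1/2})$), and part (i) with $s=1/2$ gives $\overline{S_\varphi T_\Phi S_\rho}=\overline{\Delta_\varphi^{-1/2}T_\Phi\Delta_\rho^{1/2}}=T_\Phi$. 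I expect (ii) to be the main obstacle: because $T_\Phi$ is a non-invertible contraction rather than a unitary, the uniqueness-of-polar-decomposition shortcut available to Haagerup is closed off, so one must instead obtain the equality $J_\varphi T_\Phi J_\rho=T_\Phi$ by hand, manufacturing the required density from the invariance $\Delta_\rho^{1/2}(\mathfrak{D}_\rho)=\mathfrak{D}_\rho$ while carefully tracking every interchange between the conjugate-linear maps $S_\rho,S_\varphi,J_\rho,J_\varphi$ and the linear operator $U=J_\varphi T_\Phi J_\rho$.
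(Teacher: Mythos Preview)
Your argument is correct and uses the same ingredients as the paper's proof: Theorem~\ref{Commute}, the $*$-preservation of $\Phi$, and the density of $\mathfrak{D}_\rho$. The only organizational difference is in (ii): the paper first establishes $\overline{S_\varphi T_\Phi S_\rho}=J_\varphi T_\Phi J_\rho$ from (i) with $s=-\tfrac12$ and then uses the $*$-computation to obtain $T_\Phi\subseteq J_\varphi T_\Phi J_\rho$, whereas you compare the two bounded operators directly on the dense set $\Delta_\rho^{1/2}(\mathfrak{D}_\rho)=\mathfrak{D}_\rho$; this is a cosmetic rearrangement rather than a different idea.
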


\begin{proof}
From Theorem \ref{Commute} we have $\overline{T_{\Phi}\Delta_{\rho}^{s}}=\Delta_{\varphi}^{s}T_{\Phi}$ for all $s\in\mathbb{R}$. Thus, $\Delta_{\varphi}^{-s}T_{\Phi}\Delta_{\rho}^{s}\subseteq T_{\Phi}$. But $\Delta_{\varphi}^{-s}T_{\Phi}\Delta_{\rho}^{s}$ is densely defined. To see this, note that if $\xi\in \mathfrak{D}_{\rho}$ $($where $\mathfrak{D}_{\rho}$ is defined in the statement of Theorem \ref{Commute}$)$, then $T_{\Phi}(\xi)\in \mathfrak{D}_{\varphi}$ $($where $\mathfrak{D}_{\varphi}$ is defined before Eq. \eqref{AgreeonD_U}$)$. 
By Theorem \ref{Commute} it follows that $T_{\Phi}(\Delta_{\rho}^{s}\xi)= \Delta_{\varphi}^{s}T_{\Phi}(\xi)$. Thus $T_{\Phi}(\Delta_{\rho}^{s}\xi)\in \mathfrak{D}(\Delta_{\varphi}^{-s})$, since $\Delta_{\varphi}^{-s}=(\Delta_{\varphi}^{s})^{-1}$. So $\xi\in \mathfrak{D}(\Delta_{\varphi}^{-s}T_{\Phi}\Delta_{\rho}^{s})$. It follows that 
\begin{align*}
\overline{\Delta_{\varphi}^{-s}T_{\Phi}\Delta_{\rho}^{s}}=T_{\Phi}.
\end{align*}
When $s=-\frac{1}{2}$, we have $\overline{\Delta_{\varphi}^{\frac{1}{2}}T_{\Phi}\Delta_{\rho}^{-\frac{1}{2}}}=T_{\Phi}$. But then 
\begin{align*}
S_{\varphi}T_{\Phi}S_{\rho} = J_{\varphi}\Delta_{\varphi}^{\frac{1}{2}} T_{\Phi} \Delta_{\rho}^{-\frac{1}{2}}J_{\rho} \subseteq  J_{\varphi} T_{\Phi} J_{\rho}.
\end{align*}
Since $N\Omega_{\rho}$ is a core for $S_{\rho}$, the operator $S_{\varphi}T_{\Phi}S_{\rho}$ is densely defined and consequently 
\begin{align*}
\overline{S_{\varphi}T_{\Phi}S_{\rho}} =  J_{\varphi} T_{\Phi} J_{\rho}.
\end{align*}
Again, since $S_{0,\varphi}T_{\Phi}S_{0,\rho}=T_{\Phi}$ on $N\Omega_{\rho}$, we obtain $\overline{S_{0,\varphi}T_{\Phi}S_{0,\rho}}=T_{\Phi}$. But $S_{0,\varphi}T_{\Phi}S_{0,\rho}\subseteq S_{\varphi}T_{\Phi}S_{\rho}$, and so $T_{\Phi}\subseteq J_{\varphi} T_{\Phi} J_{\rho}$. Finally, because $T_{\Phi}$ and $J_{\varphi} T_{\Phi} J_{\rho}$ are bounded we have equality, i.e., $J_{\varphi} T_{\Phi} J_{\rho}=T_{\Phi}$. 
\end{proof}

%%%%%%%%%%%%%%%%%%%%%%%%%%%%%%%%%%%%%%%%%%%%%%%%%%%%%%%%%%%%%%%%%%%%%%%%%%%%%%%%%%%%%%%%%%%%%%%%%

%%%%%%%%%%%%%%%%%%%%%%%%%%%%%%%%%%%%%%%%%%%%%%%%%%%%%%%%%%%%%%%%%%%%%%%%%%%%%%%%%%%%%%%%%%%%%%%%%%%%%%%%%%

\end{document}